\def\diag{\textrm{diag}}
\def\h{\hspace{-0.2cm}}
\newtheorem{theorem}{\bf Theorem}
\newtheorem{lemma}{\bf Lemma}
\newtheorem{example}{\bf Example}
\begin{document}
\title{\bf A block triangular preconditioner for a class of three-by-three block saddle point problems}
\author{\small\bf  Hamed Aslani$^\dag$, Davod Khojasteh Salkuyeh$^{\ddag\dag}$\thanks{\noindent Corresponding author. \newline 
		Emails:	hamedaslani525@gmail.com (H. Aslani), khojasteh@guilan.ac.ir (D.K. Salkuyeh)}\\[2mm]
	\textit{{\small $^\dag$Faculty of Mathematical Sciences, University of Guilan, Rasht, Iran}} \\
	\textit{{\small $^\ddag$Faculty of Mathematical Sciences, and Center of Excellence for Mathematical Modelling,}}\\
	\textit{{\small Optimization and Combinational Computing (MMOCC), University of Guilan, Rasht, Iran}}  \\
}

\date{}
\maketitle
\vspace{-0.5cm}

\noindent\hrulefill\\
{\bf Abstract.}   This paper deals with solving a class of three-by-three block saddle point problems. The systems are solved by preconditioning techniques. Based on an iterative method, we construct a block upper triangular preconditioner. The convergence of the presented method is studied in details. Finally, some numerical experiments are given to demonstrate the superiority of the proposed preconditioner over some existing ones. \\[-3mm]

\noindent{\it Keywords}:  three-by-three saddle point,  convergence,  preconditioning, Krylov methods, GMRES.\\
\noindent
\noindent{\it AMS Subject Classification}: 65F10, 65F50, 65F08. \\

\noindent\hrulefill\\

\pagestyle{myheadings}\markboth{H. Aslani, D.K. Salkuyeh}{A block triangular preconditioner for three-by-three block saddle point problems}
\thispagestyle{empty}

\section{Introduction}
We are concerned with the following three-by-three block system of linear equations
\begin{equation}\label{eq1}
\mathcal{A} {\bf x} \equiv\left(\begin{array}{ccc}
{A} & {B^{T}} & {0} \\
{B} & {0} & {C^{T}} \\
{0} & {C} & {0}
\end{array}\right)\left(\begin{array}{l}
{x} \\
{y} \\
{z}
\end{array}\right)=\left(\begin{array}{l}
{f} \\
{g} \\
{h} 
\end{array}\right),
\end{equation}
where $A\in \mathbb{R}^{n\times n}$ is a symmetric positive definite (SPD), $B\in \mathbb{R}^{m\times n}$ and  $C\in \mathbb{R}^{l\times m}$ have full row rank,   $f\in \mathbb{R}^n$, $g\in \mathbb{R}^m$ and $h\in \mathbb{R}^l$ are known, and ${\bf x}=\left(x; y; z\right)$ is an unknown vector to be determined. We use $\left(x; y; z\right)$ to denote the vector $(x^{T},y^{T},z^{T})^{T}.$ Throughout the paper, we assume that $n \geq m$ and $m \geq l.$ These hypothesis  guarantee the nonsingularity of \eqref{eq1}, see \cite{A2} for further details. So, the solution of \eqref{eq1} exists and is unique. In this case, the coefficient matrix of the system \eqref{eq1} is of order $\bf{n}$, in which ${\bf{n}}=n+m+l.$


Evidently, one can solve the equivalent linear linear system instead of the original system: 
\begin{equation}\label{eq2}
\mathcal{B} {\bf x} \equiv\left(\begin{array}{ccc}
{A} & {B^{T}} & {0} \\
{-B} & {0} & {-C^{T}} \\
{0} & {C} & {0}
\end{array}\right)\left(\begin{array}{l}
{x} \\
{y} \\
{z}
\end{array}\right)=\left(\begin{array}{l}
{f} \\
{-g} \\
{h}
\end{array}\right)={\bf b}.
\end{equation}
Although $\mathcal{B}$ loses symmetry, it retains some  noteworthy properties:\\
\noindent 1. $\mathcal{B}$ is semipositive real, that is, $v^{T} \mathcal{B}  v > 0$, for all $v \in \mathbb{R}^{n}$;\\
\noindent 2. $\mathcal{B}$  is positive semistable which means that $\Re(\lambda) \geq 0$ for all $\lambda \in \sigma(\mathcal{B} ) $, where $\sigma(\mathcal{B})$ 
denotes the spectrum of $\mathcal{B}$.\\ 
\noindent These properties are so important for Krylov subspace methods like GMRES (see \cite {Benzi1,Saad}).

Systems of linear equations with the form \eqref{eq1} are called three-by-three saddle point problems, which appears in many engineering applications, such as the least squares problems \cite{A7}, the Karush-Kuhn-Tucker (KKT) conditions of a type of quadratic programming \cite{KKT}, the discrete finite element methods for solving time-dependent Maxwell equation with discontinuous coefficient \cite{A1,A3,A5} and so on.

The stationary iterative methods usually combined with the acceleration techniques, be-
cause they may fail to converge or converge too slowly. The acceleration techniques, such as Chebyshev or Krylov subspace methods, while very successful, have some limitations. For instance, the use of Krylov acceleration require the computation of an orthonormal basis for the Krylov subspace, which may to have an adverse impact on the efficiency of these methods, like GMRES. There are some alternative acceleration techniques investigated by researchers, which we do not discuss here.

The coefficient matrix $\mathcal{A}$ in Eq. \eqref{eq1} can be viewed as a standard block saddle point problem of the form
\begin{equation}\label{part1}
\mathcal{A} = \left( {\begin{array}{cc|c}
	A & {B^T } & {0}  \\
	B & 0 & C^T   \\
	\hline
	0 & C & 0  \\
	\end{array}} \right),
\end{equation} 
or 
\begin{equation}\label{part2}
\mathcal{A} = \left( {\begin{array}{c|cc}
	A & {B^T } & {0}  \\
	\hline
	B & 0 & C^T   \\
	0 & C & 0  \\
	\end{array}} \right).
\end{equation}
Since the attributes of the submatrix in \eqref{part1} and \eqref{part2} are different from the standard saddle point problems, many preconditioning strategies in the literature for standard two-by-two saddle point problems can not be directly  applied for solving \eqref{eq1}, for instance, shift-splitting preconditioners \cite{Beik-SIAM,SS1,SS2,SS3,SS4,SS5,SS6,SalkuyehNA}, block triangular preconditioners \cite{BT1,BT2,BT3,BT4,BT5} and parameterized preconditioners \cite{PT}. In recent years, the iterative solution of the three-by-three saddle point problems has attracted substantial attention. Recently, Abdolmaleki et al. \cite{BD} proposed the following block diagonal preconditioner
 \begin{equation}\label{eqBD1}
 \mathcal{P}_{D1}=\left(\begin{array}{ccc}
 {A} & {0} & {0} \\
 {0} & {\alpha I + \beta B B^{T}} & {0} \\
 {0} & {0} & {\alpha I + \beta CC^{T}}
 \end{array}\right),
 \end{equation}
where $\alpha, \beta >0.$ They also discussed  properties of the corresponding iteration matrix ${\mathcal{P}}_{D1} ^{-1} \mathcal{B}.$ In \cite{Huang1}, the following preconditioner was applied for accelerating the convergence rate of Krylov subspace method
\begin{equation}\label{eq4}
\mathcal{P}_{D2}=\left(\begin{array}{ccc}
{A} & {0} & {0} \\
{0} & {S} & {0} \\
{0} & {0} & {C S^{-1} C^{T}}
\end{array}\right),
\end{equation}
where $S=B A^{-1} B^{T}.$ The preconditioner $\mathcal{P}_{D1}$ recived wide attention. Xie and Li \cite{A2} introduced  the following preconditioners
\[
{\cal P}_1=\begin{pmatrix}
A &   0  & 0 \\
B &  -S  & C^T \\
0 &   0  & CS^{-1}C^T 
\end{pmatrix},~
{\cal P}_2=\begin{pmatrix}
A &   0  & 0 \\
B &  -S  & C^T \\
0 &   0  & -CS^{-1}C^T 
\end{pmatrix},~
{\cal P}_3=\begin{pmatrix}
A &   B^T  & 0 \\
B &  -S    & 0 \\
0 &   0    & -CS^{-1}C^T 
\end{pmatrix},
\]
\small{These three} block preconditioners lead to the corresponding preconditioned matrices $\mathcal{P}_{1} ^{-1} \mathcal{A}, \mathcal{P}_{2} ^{-1} \mathcal{A}$ and $\mathcal{P}_{3} ^{-1} \mathcal{A},$ that have only eigenvalues $\{1\},\{ \pm \frac{ 1}{2} , 1\}$ and $\{\pm 1\},$ respectively. Numerical results in \cite{BD} confirmed the robustness of the preconditioner $\mathcal{P}_{D1},$ for solving \eqref{eq2}. In this work, a development of the block diagonal preconditioner $\mathcal{P}_{D1}$ is employed. This new preconditioner is induced using a splitting of the coefficient matrix in \eqref{eq2}. The corresponding splitting iteration method and its convergence properties are given.  

The rest of  paper is arranged as follows. Section \ref{sec2} is devoted to introduce and convergence analysis of the proposed method. Furthermore, implementation issues of the corresponding preconditioner  are briefly discussed. Numerical experiments are presented in Section \ref{sec3}. Finally, in Section \ref{sec4} some
concluding remarks are given.

Throughout the paper, $I$ stands for the identity matrix of suitable order. $x^{H}$ indicates the conjugate transpose of  any arbitrary complex vectors $x.$ For a given matrix $A$ with real eigenvalues, $\lambda_{\min}$ and $\lambda_{\max}$ stand for the minimum and maximum eigenvalue of $A,$ respectively. Moreover, the notations $\sigma(A)$ and $\rho(A)$ denote the set of all eigenvalues of $A$ and the spectral radius of $A,$ respectively. The minimum and maximum singular value of  $A$ are represented by $\sigma_{\min}$ and $\sigma_{\max}$, respectively. 
\section{Preconditioner and convergence analysis}\label{sec2}
We first split the coefficient martix in \eqref{eq2} as $\mathcal{B}=\mathcal{P}-\mathcal{R},$ where  
$$\mathcal{P}=\left(\begin{array}{ccc}
	A & B^{T} & 0 \\
	0 & \alpha I+ \beta B B^{T} & -C^{T} \\
	0 & 0 & \alpha I+ \beta C C^{T}
\end{array}\right), \quad \mathcal{R}=\left(\begin{array}{ccc}
0 & 0 & 0 \\
B & 0 & -C^{T} \\
0 & -C & 0
\end{array}\right),$$
in which $\alpha$ and $\beta$ are given positive constants. Evidently, the matrix $\mathcal{P}$ is nonsingular. So, the iterative scheme associated with the splitting $\mathcal{B}=\mathcal{P}-\mathcal{R},$ can be constructed as 
\begin{equation}\label{split}
{\bf{x}}^{(k+1)}= \mathcal{G}_{\alpha,\beta} {\bf{x}}^{(k)}+ f, \qquad k=0,1,2,\dots,
\end{equation}
where $\bf{x}^{(0)}$ is arbitrary and $\mathcal{G}_{\alpha,\beta}= \mathcal{P}^{-1} \mathcal{R}$ is the iteration matrix and $f=\mathcal{P}^{-1} {\bf {b}}.$
 
 In the sequal, we investigate the convergence properties of the proposed  iterative method for solving the double saddle point problem \eqref{eq2}. To do so, we need to recall a result about the evaluation of the roots of a quadratic equations as follows.
\begin{lemma}\label{lem1}
\cite{Roots} Consider the quadratic equation $x^{2} - b x +c=0,$ where $b $ and $c$ are real
	numbers. Both roots of the equation are less than one in modulus if and only if $|c| < 1$ and $|b| < 1+c.$
\end{lemma}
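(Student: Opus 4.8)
The plan is to prove both implications by relating the location of the roots $x_1,x_2$ of $p(x):=x^2-bx+c$ to the sign of $p$ at the endpoints $\pm 1$ together with the position of the parabola's axis of symmetry $x=b/2$. I would use Vieta's formulas $x_1+x_2=b$, $x_1x_2=c$ throughout, and the fact that $p$ is monic, hence convex on $\mathbb{R}$, with $p(1)=1-b+c$ and $p(-1)=1+b+c$, so that the single inequality $|b|<1+c$ is equivalent to the pair $p(1)>0$, $p(-1)>0$.

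For the forward implication, assume $|x_1|<1$ and $|x_2|<1$. Then $|c|=|x_1x_2|=|x_1|\,|x_2|<1$ immediately. To get $|b|<1+c$ I would split on whether the roots are a genuinely complex-conjugate pair or are real. In the complex case, for real $t$ one has $p(t)=(t-\Re x_1)^2+(\Im x_1)^2>0$ since $\Im x_1\neq 0$; in particular $p(1)>0$ and $p(-1)>0$. In the real case, $x_1,x_2\in(-1,1)$ gives $1-x_1>0$, $1-x_2>0$ and $-1-x_1<0$, $-1-x_2<0$, so $p(1)=(1-x_1)(1-x_2)>0$ and $p(-1)=(-1-x_1)(-1-x_2)>0$. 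Either way $1-b+c>0$ and $1+b+c>0$, i.e. $|b|<1+c$.

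For the converse, assume $|c|<1$ and $|b|<1+c$; note this forces $1+c>0$ and hence $|b|<2$. I would again distinguish the sign of the discriminant $\Delta=b^2-4c$. If $\Delta<0$ the roots are complex conjugates with $|x_1|^2=|x_2|^2=x_1x_2=c$; since $\Delta<0$ forces $c>0$ and $c<1$ by hypothesis, $|x_{1,2}|=\sqrt{c}<1$. If $\Delta\ge 0$ the roots are real, and by convexity both are $<1$ precisely when $p(1)>0$ and $b/2<1$, and both are $>-1$ precisely when $p(-1)>0$ and $b/2>-1$; here $p(1)=1-b+c>0$ and $p(-1)=1+b+c>0$ follow from $|b|<1+c$, while $|b/2|<1$ follows from $|b|<2$. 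Hence both real roots lie in $(-1,1)$, which finishes the argument.

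The proof has no deep step; the points needing care are the standard root-localization fact for convex quadratics (both roots below $\xi$ iff $p(\xi)>0$ and the axis of symmetry lies below $\xi$), the observation that $|b|<1+c$ and $|c|<1$ together yield $|b|<2$, and a clean real/complex case split so that no borderline configuration with a root on the unit circle is overlooked. An alternative I could take is the M\"obius substitution $x=(1+w)/(1-w)$, which maps the open unit disk onto the open left half-plane and sends $p$ to $q(w)=(1+b+c)w^2+2(1-c)w+(1-b+c)$; the Routh--Hurwitz criterion in degree two then requires the three coefficients to share a common sign, and one checks that the all-negative option is impossible (it would force $c>1$ together with $b<-2$ and $b>2$) while the all-positive option is exactly $|c|<1$ and $|b|<1+c$.
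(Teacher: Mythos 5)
The paper does not actually prove this lemma: it is quoted verbatim from Young's book (the reference \cite{Roots}) and used as a black box, so there is no in-paper argument to compare against. Your proof is correct and self-contained. The forward direction is handled cleanly by writing $p(1)=(1-x_1)(1-x_2)$ and $p(-1)=(-1-x_1)(-1-x_2)$ in the real case and by the positivity of $(t-\Re x_1)^2+(\Im x_1)^2$ in the complex-conjugate case, and the converse correctly splits on the sign of the discriminant, using $|x_{1,2}|^2=x_1x_2=c$ when $\Delta<0$ and the standard criterion (sign of $p$ at the endpoint plus location of the axis $b/2$, with $|b|<2$ extracted from $|b|<1+c<2$) when $\Delta\ge 0$. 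Two small quibbles, both confined to the optional Cayley-transform aside: as a map of $w$, the substitution $x=(1+w)/(1-w)$ sends the open \emph{left half-plane} onto the open unit disk (it is its inverse that sends the disk to the half-plane), and to apply Routh--Hurwitz one should observe that $1+b+c=p(-1)\neq 0$ whenever both roots are strictly inside the disk, so that $q$ is genuinely quadratic. Neither point touches your main argument, which stands as a complete proof of a statement the paper only cites.
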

\begin{theorem}\label{th1}
	Suppose that $A\in \mathbb{R}^{n \times n}$ is SPD, $B \in \mathbb{R}^{m \times n}$ and $C \in \mathbb{R}^{m \times l}$ are full row rank matrices. Then, the iterative method
	\eqref{split} converges to the uniqe solution of \eqref{eq2} for any initial guess, if
	\begin{equation}\label{assump}
	\frac{{{\sigma_{\max}}^{2} (C)}}{\alpha+\beta {\sigma_{\min}}^{2}(C^{T})} +2 \frac{ {{\sigma_{\max}}^{2} ( B^{T})}}{\lambda_{\min} (A)} < 4 (\alpha +\beta {\sigma_{\min}}^{2} ( B^{T})).
	\end{equation}  
\end{theorem}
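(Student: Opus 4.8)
The plan is to show that the spectral radius of the iteration matrix $\mathcal{G}_{\alpha,\beta} = \mathcal{P}^{-1}\mathcal{R}$ is strictly less than one under the stated hypothesis, which is equivalent to convergence of \eqref{split}. First I would set up the eigenvalue problem $\mathcal{R}{\bf v} = \lambda \mathcal{P}{\bf v}$ with ${\bf v} = (x;y;z) \neq 0$, writing it out block by block as a system of three equations. Because the $(1,1)$ block of $\mathcal{R}$ is zero, the first equation reads $\lambda(Ax + B^T y) = 0$, so one must branch on whether $\lambda = 0$ (which is harmless for the spectral radius) or $\lambda \neq 0$, in which case $Ax = -B^T y$ and hence $x = -A^{-1}B^T y$. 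The remaining two equations then give $Bx - C^T z = \lambda\big((\alpha I + \beta BB^T)y - C^T z\big)$ and $-Cy = \lambda(\alpha I + \beta CC^T)z$. Substituting $x = -A^{-1}B^T y$ into the second equation yields a relation purely among $y$ and $z$, and the third lets one solve $z$ in terms of $y$ (note $\alpha I + \beta CC^T$ is SPD hence invertible).

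Next I would eliminate $z$ to obtain a single generalized eigenvalue relation in $y$ alone, of the schematic form that after taking inner products with $y$ (or $y^H$, allowing complex $\lambda$) produces a scalar quadratic equation in $\lambda$ with coefficients built from the Rayleigh-type quantities $\dfrac{y^H B A^{-1} B^T y}{\|y\|^2}$ and $\dfrac{y^H C^T(\alpha I + \beta CC^T)^{-1} C y}{\|y\|^2}$ and the parameters $\alpha,\beta$. I expect the quadratic to look like $\lambda^2 - b\lambda + c = 0$ after normalization, where $b$ and $c$ are real (using symmetry of $A$, $BB^T$, $CC^T$, and the SPD structure) and can be bounded using $\lambda_{\min}(A) \le \dfrac{y^H y}{y^H A^{-1} y}\,$-type estimates together with $\sigma_{\min}(B^T), \sigma_{\max}(B^T), \sigma_{\min}(C), \sigma_{\max}(C)$. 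Then I would invoke Lemma \ref{lem1}: both roots have modulus less than one iff $|c| < 1$ and $|b| < 1 + c$. Translating these two scalar inequalities back through the worst-case bounds on the Rayleigh quotients should reduce precisely (or be implied by) the hypothesis \eqref{assump}; the factor $2$ and the $4$ in \eqref{assump} presumably emerge from the $|b| < 1+c$ condition combined with an AM–GM or triangle-inequality step on the cross term.

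The main obstacle I anticipate is the bookkeeping in the elimination step: because $\mathcal{P}$ is block upper triangular and $\mathcal{R}$ is block lower triangular (plus the off-diagonal $-C^T$ appearing in both the $(2,3)$ entries), the substitution does not decouple as cleanly as in a standard two-by-two saddle point analysis, and one must be careful that the matrix $(\alpha I + \beta BB^T)$ acting on $y$ interacts with $BA^{-1}B^T$ in the right way — in particular, whether $y$ can be chosen in a convenient subspace (e.g. where these operators are simultaneously controllable) or whether genuinely simultaneous Rayleigh-quotient bounds are needed. A secondary subtlety is handling the case where the intermediate quantity multiplying $z$ or $y$ is singular or where $y = 0$ forces $z = 0$ forces ${\bf v} = 0$ (a contradiction), so those degenerate branches must be dispatched first. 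Once the scalar quadratic is in hand with clean real coefficients, the rest is a direct application of Lemma \ref{lem1} and elementary inequality manipulation, so I would spend most of the effort making the reduction to the quadratic rigorous and explicit.
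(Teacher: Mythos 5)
Your overall strategy is exactly the paper's: pass to the generalized eigenproblem $\mathcal{R}\mathbf{v}=\lambda\mathcal{P}\mathbf{v}$, dispatch the degenerate branches ($\lambda=0$, $y=0$), eliminate $x$ and $z$ in favour of $y$, contract with $y^{H}$ to get a real scalar quadratic in $\lambda$, apply Lemma~\ref{lem1}, and then bound the two Rayleigh quotients $s=y^{H}BA^{-1}B^{T}y$ and $t=y^{H}C^{T}(\alpha I+\beta CC^{T})^{-1}Cy$ by Courant--Fischer to land on \eqref{assump}. However, there is one concrete point where your plan, carried out literally, would not reach the stated condition: you wrote the second and third block equations from the matrix $\mathcal{R}$ \emph{as displayed}, namely $Bx-C^{T}z=\lambda((\alpha I+\beta BB^{T})y-C^{T}z)$ and $-Cy=\lambda(\alpha I+\beta CC^{T})z$. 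That displayed $\mathcal{R}$ does not satisfy $\mathcal{P}-\mathcal{R}=\mathcal{B}$ (check the $(2,2)$, $(2,3)$ and $(3,3)$ blocks); the splitting actually requires
\begin{equation*}
\mathcal{R}=\mathcal{P}-\mathcal{B}=\left(\begin{array}{ccc}0&0&0\\ B&\alpha I+\beta BB^{T}&0\\ 0&-C&\alpha I+\beta CC^{T}\end{array}\right),
\end{equation*}
which gives instead $\lambda((\alpha I+\beta BB^{T})y-C^{T}z)=Bx+(\alpha I+\beta BB^{T})y$ and $\lambda(\alpha I+\beta CC^{T})z=-Cy+(\alpha I+\beta CC^{T})z$, hence $z=(1-\lambda)^{-1}(\alpha I+\beta CC^{T})^{-1}Cy$ and the quadratic $\lambda^{2}-\frac{2a-t-s}{a}\lambda+\frac{a-s}{a}=0$ with $a=y^{H}(\alpha I+\beta BB^{T})y$. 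Lemma~\ref{lem1} applied to this quadratic reduces to $t+2s<4a$, which Courant--Fischer turns into \eqref{assump}. With your equations the elimination gives $z=-\lambda^{-1}(\alpha I+\beta CC^{T})^{-1}Cy$ and the quadratic $\lambda^{2}+\frac{s+t}{a}\lambda-\frac{t}{a}=0$, whose Lemma~\ref{lem1} conditions reduce to $s+2t<a$ --- a genuinely different inequality that neither matches \eqref{assump} nor describes the iteration that actually solves \eqref{eq2} (the fixed point of $\mathbf{x}\mapsto\mathcal{P}^{-1}\mathcal{R}\mathbf{x}+\mathcal{P}^{-1}\mathbf{b}$ with the displayed $\mathcal{R}$ is not a solution of $\mathcal{B}\mathbf{x}=\mathbf{b}$).

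Two smaller points. First, with the correct $\mathcal{R}$ the elimination divides by $1-\lambda$, so you must rule out $\lambda=1$ separately; this follows by showing $\lambda=1$ forces $y^{H}BA^{-1}B^{T}y=0$, hence $B^{T}y=0$, hence $y=0$ by the full row rank of $B$, contradicting the already-established $y\neq 0$. Your generic remark about ``degenerate branches'' gestures at this but the $\lambda=1$ case needs an explicit argument, since it is not covered by the $\lambda=0$ branch. Second, no AM--GM step is needed for the factors $2$ and $4$: they fall out directly from $|2a-t-s|<2a-s$ in the case $2a-t-s<0$, together with the observation that $t+2s<4a$ already implies $|c|<1$, i.e.\ $s<2a$.
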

\begin{proof}
	Assume that $(\lambda; \bf{x})$ is an eigenpair of the iteration matrix $\mathcal{G}_{\alpha,\beta},$ where ${\bf{x}}:=(x;y;z)$. So, we have $\mathcal{G}_{\alpha,\beta} {\bf{x}} = \lambda {\bf{x}}$ which is equivalent to
	say that
\begin{numcases}{}
\lambda (A x + B^{T} y)=0, \label{eq21}\\ 
\lambda ((\alpha I + \beta B B^{T}) y -C^{T} z)=Bx + (\alpha I + \beta B B^{T}) y,
\label{eq22}\\
\lambda (\alpha I + \beta C C^{T}) z=-C y +(\alpha I + \beta C C^{T}) z .\label{eq23}
\end{numcases}
\noindent If $\lambda=0,$ then there is nothing to prove. So, we assume that $\lambda \neq 0.$  We claim that $y \neq 0.$ If not, from \eqref{eq21} we have $A x=0.$ Since $A$ is a SPD matrix, we deduce that $x=0.$ Hence, from \eqref{eq22}  and the assumption that $C$ has full row rank we conclude that $z=0.$ Therefore, ${\bf{x}}=0$ and it is contrary to the assumption that ${\bf{x}}$ is an eigenvector. 

Furthermore, we assert that $\lambda \neq 1.$ Otherwise, the Eqs. \eqref{eq22}, \eqref{eq23} and \eqref{eq24} are reduced to
\begin{eqnarray} 
x &=& -A^{-1} B^{T} y,\label{eq24}\\
C^{T} z &=& -Bx,\label{eq25}\\
y^{H} C^{T} &=& 0,\label{eq26}
\end{eqnarray}
respectively. Pre-multiplying  Eq. \eqref{eq25}  by $y^{H}$ and substituting \eqref{eq26} into it, gives $y^{H} B x=0.$ This along with  \eqref{eq24} leads to $y^{H} B A^{-1} B^{T} y=0,$ equivalently, $(B^{T} y)^{H}  A^{-1} (B^{T} y)=0.$ In view of the positive definitness of $A,$ we get $B^{T}y=0.$ Then, since $B$ is of full row rank, we deduce that $y=0,$ which is impossible.  

In the following, we assume that $\lambda \neq 0,1$ and $ y \neq 0$. Without loss of generality, we assume that $||y||_{2} = 1$.  From \eqref{eq21} and \eqref{eq23}, we obtain
\begin{eqnarray}
x &=& -A^{-1} B^{T} y,\label{eq27} \\
z &=& \frac{1}{1-\lambda} (\alpha I+\beta C C^{T})^{-1} C y. \label{eq28}
\end{eqnarray}
\noindent Substituting the above relations into \eqref{eq22}, yields
$$\lambda \left(\left(\alpha I+\beta B B^{T}\right)y+\frac{1}{\lambda-1} C^{T} (\alpha I+\beta C C^{T})^{-1} Cy\right)=-B A^{-1} B^{T} y+\left(\alpha I+\beta B B^{T}\right)y.$$
By multiplying both sides of the preceding equality on the left by $\lambda-1$ and $y^{H}$ and with some algebra, we obtain the following quadratic equation
$$\lambda ^{2} -\frac{b}{a} \lambda + \frac{c}{a}=0,$$
where 
$$a=y^{H} (\alpha I+\beta B B^{T}) y, \quad c=y^{H} (\alpha I+\beta B B^{T}) y-y^{H} B A^{-1} B^{T} y,$$
$$ b=2 y^{H} (\alpha I+\beta B B^{T}) y -y^{H} C^{T}  (\alpha I+\beta C C^{T})^{-1} Cy - y^{H} B A^{-1} B^{T} y.$$
According to Lemma \ref{lem1}, the following  inequalities
 \begin{equation}\label{eq30}
 \left|\frac{c}{a}\right|<1, \qquad \left|\frac{b}{a}\right|< 1+\frac{c}{a},
 \end{equation}
  imply $|\lambda|<1.$ Clearly, whenever the inequality 
\begin{equation}\label{eq31}
y^{H} B A^{-1} B^{T} y< 2 y^{H} (\alpha I+\beta B B^{T}) y,
\end{equation}
holds, the first inequality of \eqref{eq30} is on. On the other hand, by easy manipulations we can observe that the second relation of \eqref{eq30} holds, if
\begin{equation}\label{eq32}
P_{\alpha,\beta}:=y^{H} C^{T} (\alpha I+\beta C C^{T})^{-1} Cy +2y^{H} B A^{-1} B^{T} y< 4 y^{H} (\alpha I+\beta B B^{T}) y =:Q_{\alpha,\beta}.
\end{equation}
Notice that, inequality \eqref{eq31} is ensured when \eqref{eq32} holds true. Hence, Eq. \eqref{eq31} is ignored. We first assume that $w:=Cy\neq 0$ (note that  $v=B^Ty\neq 0$).
 According to Courant-Fisher inequality \cite{Saad}  we have  
\begin{eqnarray} 
P_{\alpha,\beta} &=& 
 \frac{w^{H} (\alpha I+\beta C C^{T})^{-1} w}{ w^Hw} \frac{y^HC^TCy}{y^Hy}  +2 \frac{v^{H}  A^{-1} v}{v^Hv}  \frac{y^{H}  BB^T y}{y^Hy} \nonumber   \\ 
 &\leq& \lambda_{\max} (\alpha I+\beta C C^{T})^{-1} \lambda_{\max} (C^T C)+2 \lambda_{\max} (A^{-1})
 \lambda_{\max} (BB^T) \nonumber\\
 &=& \frac{{{\sigma_{\max}}^{2} (C)}}{\alpha+\beta {\sigma_{\min}}^{2}(C^{T})} +2  \frac{{{\sigma_{\max}}^{2} ( B^{T})}}{\lambda_{\min} (A)}. \label{EqPab}
 \end{eqnarray}
It is necessary to mention that the upper bound for $P_{\alpha,\beta}$ given above  is valid even if $w=0$. On the other hand, we have
\begin{equation}\label{EqQab}
Q_{\alpha,\beta}= y^{H} (\alpha I +\beta B B^{T}) y \geq  \alpha +\beta {\sigma_{\min}}^{2} ( B^{T}).
\end{equation}
Now, from the Eqs. \eqref{EqPab} and \eqref{EqQab} we deduce that if the inequality \eqref{assump} holds true, then the convergence of the proposed method is deduced. 
\end{proof}

Since both of the matrices $B$ and $C$ are of full row rank, we deduce that ${\sigma_{\min}} ( B^{T}),{\sigma_{\min}} ( C^{T})>0$. Hence, it follows from Eq. \eqref{assump} that for a large enough value of $\alpha$ or $\beta$ the method is convergent. However, for large values of $\alpha$ and $\beta$ the corresponding preconditioner may be inefficient. In the sequel we propose a method for choosing suitable.

Let  
$$P=2  \frac{{{\sigma_{\max}}^{2} ( B^{T})}}{\lambda_{\min} (A)}.$$ 
Based on Theorem \ref{th1}, a sufficient condition for convergence of the proposed method is as follows
\begin{equation}\label{eq33}
\frac{{{\sigma}_{\max}}^{2} (C)}{\alpha \left(1+\frac{\beta}{\alpha} {{\sigma}_{\min}}^{2} (C^{T})\right)} +P <4 \alpha \left(1+\frac{\beta}{\alpha} {{\sigma}_{\min}}^{2} (B^{T})\right).
\end{equation}
Now, if $\beta \geq \alpha$ and 
\begin{equation}\label{eq34}
\frac{{{\sigma}_{\max}}^{2} (C)}{\alpha (1+ {{\sigma}_{\min}}^{2} (C^{T}))} +P <4 \alpha (1+ {{\sigma}_{\min}}^{2} (B^{T})),
\end{equation}
then the inequality \eqref{eq33} holds true. By a little algebra, we can rewrite \eqref{eq34} as the following quadratic inequality

\begin{equation}\label{eq35}
q(\alpha):=-4 (1+{\sigma_{\min}}^{2} (B^{T})) (1+{\sigma_{\min}}^{2} (C^{T})) \alpha^{2} + P (1+{\sigma_{\min}}^{2} (C^{T})) \alpha + {\sigma_{\max}}^{2} (C) < 0.
\end{equation}  
Notice that the coefficient of $\alpha^2$ in the polynomial $q$ is negative and $q(0)={\sigma_{\max}}^{2} (C)>0$.
Therefore, the polynomial $q$ has two real roots, one negative and a positive. The positive one is given by
   $$\tilde{\alpha} = \frac{ P\left(1+{\sigma_{\min}}^{2} (C^{T})\right) + \sqrt{\Delta}} {8(1+{\sigma_{\min}}^{2} (B^{T}))(1+{\sigma_{\min}}^{2} (C^{T}))},$$
where $\Delta=P^{2} (1+{\sigma_{\min}}^{2} (C^{T}))^{2} +16 (1+{\sigma_{\min}}^{2} (B^{T})) \left(1+{\sigma_{\min}}^{2} (C^{T})\right) {\sigma_{\max}}^{2} (C) .$ 

According to the above results, we can claim that if\\
\noindent (i) $\beta \geq \alpha$,\\
\noindent (ii) $\alpha > \tilde{\alpha},$\\
then $q(\alpha)<0$ and the proposed method is convergent for any initial choice of $\bf x^{(0)},$ i.e., $\rho (\mathcal{G}_{\alpha, \beta})<1.$

Based on the above results, the eigenvalues of $\mathcal{G}_{\alpha,\beta}$ are contained in a circle centered at origin with radius 1. In addition, we obviously have
$$\mathcal{P}^{-1} \mathcal{B}=I - \mathcal{G}_{\alpha,\beta}.$$
So, the eigenvalues of $\mathcal{P}^{-1} \mathcal{B}$ included in a circle centered $(1,0)$ with radius 1. Therefore, $\mathcal{P}$ serves a preconditioner for a Krylov subspace methods such as GMRES.

We end this section by applying the preconditioner $\mathcal{P}$ within the Krylov subspace methods to solve the system $\mathcal{B} \bf{x}=\bf{b}.$ In each iteration, we need to compute vectors of the form $v=\mathcal{P}^{-1} w,$ equivalently, $w=\mathcal{P} v.$ Now, by taking $v = (v_{1}; v_{2}; v_{3})$ and $w = (w_{1}; w_{2}; w_{3})$  the following algorithm can be given:

\noindent\hrulefill

\noindent Algorithm 1: Computation  of $(v_1;v_2;v_3)=\mathcal{P} ^{-1}(w_1;w_2;w_3)$.

\noindent\hrulefill\\
\noindent 1. Solve $(\alpha I+\beta C C^{T}) v_{3}= w_{3}$ for $v_{3};$ \\
\noindent 2. Solve $(\alpha I+\beta B B^{T}) v_{2}= w_{2}+C^{T} v_{3}$ for $v_{2};$ \\
\noindent 3. Solve $A v_{1}=w_{1}-B^{T} v_{2}$ for $v_{1}$.

\noindent\hrulefill

In each step of this algorithm a system of linear equations  should be solved. Since the coefficient matrices of these systems are SPD, they can be solved exactly using the Cholesky factorization or inexactly using the conjugate gradient (CG) method. In practice, in Step 1 of algorithm it is recommended to choose the values of $\alpha$ and $\beta$ such that (See \cite{BT2,Estrin})
\[
\beta=\alpha \frac{1}{\|C\|_2^2}.
\] 
In the same way to choose the values of $\alpha$ and $\beta$ in Step 2 satisfying
\[
\beta=\alpha \frac{1}{\|B\|_2^2}.
\] 
However, since $\alpha$ and $\beta$ are in common in Steps 1 and 2 we propose to use
\begin{equation}\label{betaave}
\beta=\frac{\alpha}{2} \left(\frac{1}{\|C\|_2^2}+ \frac{1}{\|B\|_2^2}\right),
\end{equation}
for both of the steps. We will shortly see in the section of the numerical results that a small value of $\alpha$ along with the value of $\beta$ using \eqref{betaave} give usually suitable results.

\section{Numerical results}\label{sec3}

In this section, we give some numerical experiments to illustrate the superiority
of the proposed preconditioner $\mathcal{P}$ over the  recently suggested ones in the literature. At each iteration of  the preconditioners $\mathcal{P}_{D1}, \mathcal{P}_{D2}$ and $\mathcal{P}_{1},$ three linear subsystems with SPD coefficient matrices should be solved. These subsystems are solved by the CG method.

In our numerical experiments, the iteration is started from a zero vector
and terminated as soon as 
$$Res=\frac{\left\|{\mathbf{b}}-\mathcal{A} {\bf{x}}^{(k)}\right\|_{2}}{\left\|{\mathbf{b}}\right\|_{2}} \leq 10^{-6},$$
where ${\bf{x}}^{(k)}$ is the computed solution at iteration $k.$ The maximum number of
iterations is set to be 1000. We have used the right-hand side vector $\bf{b}$ 
such that the exact solution is a vector of all ones. For the inner CG iterations, the iteration is terminated as soon as the residual norm is reduced by a factor of $10^{3}$. In addition, the maximum number of inner iterations is set to be 100.  For all the test problems, we set $S=B (\diag (A))^{-1} B^{T}.$ 

In the following, we will compare the preconditioners from aspects of the number of total iteration steps (denoted by ``IT"), and elapsed CPU times in seconds (denoted by ``CPU"). As well as, the accuracy of the methods are compared under 
\[
Err=\frac{\left\|{\bf{x}}^{(k)}-\bf{x}^{*}\right\|}{\left\|\bf{x}^{*}\right\|},
\]
where ${\bf{x}}^{(k)}$ and $\bf{x}^{*}$ stand for the current iteration and the exact solution of \eqref{eq2}, respectively.  The symbols  $``\dag"$ and $``\ddag"$ show that the method has not converged in 1000 seconds and $maxit$, respectively. Also, by $``\S"$ we mean that the coefficient matrix $\mathcal{B}$ does not satisfy the assumptions:\\
\noindent (i) $A$ is a SPD matrix, \\
\noindent (ii) $B$ and $C$ are full row rank matrices.

 All the computations are implemented in \textsc{Matlab} R2019a on a Laptop with intel (R) Core(TM) i5-8265U CPU @ 1.60 GHz 8.GB.  
\begin{example}\label{ex1}\rm
	 Consider the saddle point problem \eqref{eq2} with (see \cite{Huang1,A2})
	\begin{align*}
	A=\left(\begin{array}{cc}
	{I \otimes T+T \otimes I} & {0} \\
	{0} & {I \otimes T+T \otimes I}
	\end{array}\right) \in \mathbb{R}^{2 p^{2} \times 2 p^{2}},
	\end{align*}
	$
	B=(I \otimes F \quad F \otimes I) \in \mathbb{R}^{p^{2} \times 2 p^{2}}$ and  $C=E \otimes F \in \mathbb{R}^{p^{2} \times p^{2}}
	$
	where
	\begin{align*}
	T=\frac{1}{h^{2}} \cdot \operatorname{tridiag}(-1,2,-1) \in \mathbb{R}^{p \times p}, \quad F=\frac{1}{h} \cdot \operatorname{tridiag}(0,1,-1) \in \mathbb{R}^{p \times p},
	\end{align*}
	and $
	E=\operatorname{diag}\left(1, p+1, 2p+1, \ldots, p^{2}-p+1\right)
	$ in which $\otimes$ denotes the Kronecker product and $h={1}/{(p+1)}$ stands for the discretization meshsize.
For the preconditioner $\mathcal{P}$, we set $\alpha=5\times 10^{-2}$ and compute $\beta$ using \eqref{betaave}. These values are listed in Table \ref{tab1}. 
\begin{table}[!t]
	\centering
	\caption{\small{The values of $\beta$ involved in the preconditioner $\mathcal{P}$ for Example \ref{ex1} with $\alpha=10^{-2}$ .}\label{tab4}}\vspace{0.25cm}
	\begin{tabular}{|p{1cm}| p{2.1cm} p{2.1cm}p{2.1cm} p{2.1cm}p{2.1cm}| }
		\hline
		\multirow{1}{*}{$p$}   & 16            &   32   &  64   &  128   &  256\\
		\hline
		\multirow{1}{*}{$\beta$}   & 0.94            &   1.83   &  3.60   &  7.14   &  14.22\\
		\hline
	\end{tabular}
		\label{tab1}
	\centering
	\caption{Numerical results for Example \ref{ex1}.\label{tab2}}\vspace{0.25cm}
	\begin{tabular}{|p{1.5cm}| p{1.5cm}|p{1.7cm}p{1.7cm} p{1.7cm}p{1.7cm}p{1.7cm}| }
		\hline
		\multirow{1}{*}{Precon.}   & $p$            &   16   & 32   &  64   &  128&256\\
		\hline\hline
		
		\multirow{4}{*}{$I$} & IT  & 425 &949&\ddag&\ddag&\ddag\\
		& CPU & 0.72 &8.97 &55.10&161.23&637.21\\
		& Res & 8.6e-07&9.9e-07 &2.7e-03&6.7e-03&4.9e-02\\
		& Err & 2.6e-06&2.4e-5 &1.8e-01&5.5e-01&7.8e-01\\
		\hline
		\multirow{4}{*}{$\mathcal{P} $}&
		IT&33&42&53&75&141\\
		& CPU&0.03&0.11&0.75&4.05&70.81\\
		&Res&8.7e-07&5.9e-07&6.2e-07&8.9e-07&8.8e-07\\
		& Err &1.6e-06 &1.6e-06&6.6e-07&2.2e-05&1.9e-05\\
		\hline
		\multirow{4}{*}{\small{$\mathcal{P}_{D1}$}\tiny{(case(i))}} &
		IT&109&80&65&89&191\\
		& CPU&0.17&0.33&1.89&6.91&113.10\\
		&Res&7.0e-07&8.4e-07&7.8e-07&8.6e-07&9.7e-07\\
		& Err & 3.0e-07 &6.0e-07&1.2e-06&2.3e-05&4.3e-05\\
		\hline
		\multirow{4}{*}{\small{$\mathcal{P}_{D1}$}\tiny{(case(ii))}} &
		IT&49&53&69&103&181\\
		& CPU&0.08&0.14&1.10&6.50&102.66\\
		&Res&3.9e-07&6.6e-07&2.8e-07&9.6e-07&9.6e-07\\
		& Err & 2.1e-07 &1.8e-07&8.2e-06&2.5e-05&4.0e-05\\
		\hline
		\multirow{4}{*}{$\mathcal{P}_{1}$}&
		IT&114&466&\ddag&-&-\\
		& CPU&0.75&24.66&421.56&\dag&\dag\\
		&Res&8.10e-07&2.7e-06&4.6e-02&-&-\\
		& Err & 1.9e-06&2.1e-06&4.2e-01&-&-\\
		\hline
		\multirow{4}{*}{$\mathcal{P}_{D2}$}&
		IT&170&792&\ddag&-&-\\
		& CPU&1.20&48.91&406.2&\dag&\dag\\
		& Res&9.2e-07&1.5e-04&5.7e-02&-&-\\
		& Err & 1.0e-6&1.8e-05&1.4e-01&-&-\\
		\hline
	\end{tabular}
	\label{tab2}
\end{table}	

We observe from Table \ref{tab1} that $\beta \geq \alpha,$ which is in agreement with what we claimed in Section \ref{sec2}. We consider two choices for parameters $\alpha$ and $\beta$ in the preconditioner $\mathcal{P}_{D1}$ as the following cases:\\
\noindent Case (i): $\alpha=10^{-3}$ and $\beta=1,$ as considered in \cite{BD};\\
\noindent Case (ii): According to the Table \ref{tab1}.\\
 \noindent Numerical results of the flexible GMRES (FGMRES) method \cite{Saad,FGMRES}  in conjunction with the   preconditioners  for solving the double saddle point problem \eqref{eq2} are presented in Table \ref{tab2}. These results clearly show that the preconditioner $\mathcal{P}$ is quite effective. In this problem, we find that the overall computation times and the iteration numbers for the preconditioner $\mathcal{P}$ is less than the other examined  preconditioners.  
\end{example}
\begin{example}\label{ex2} \rm
	We consider the three-by-three block saddle point problem \eqref{eq1} for which ( see \cite{Huang1,A2})
	\begin{align*}
	A=\operatorname{blkdiag}\left(2 W^{T} W+D_{1}, D_{2}, D_{3}\right) \in \mathbb{R}^{n \times n},
	\end{align*}
	is a block-diagonal matrix,
	\begin{align*}
	B=\left[E,-I_{2 \widetilde{p}}, I_{2 \widetilde{p}}\right] \in \mathbb{R}^{m \times n} \text { and } \quad C=E^{T} \in \mathbb{R}^{l \times m},
	\end{align*}
	are both full row-rank matrices where $\tilde{p}=p^{2},$
	$\hat{p}=p(p+1);$ $W=(w_{i,j}) \in \mathbb{R}^{\hat{p} \times \hat{p}}$ with $w_{i,j}=e^{-2((i/3)^{2}+(j/3)^{2}};$ $D_{1}=I_{\widehat{p}}$ is an identity matrix; $D_{i}=\operatorname{diag}(d_{j}^{(i)}) \in \mathbb{R}^{2\tilde{p} \times 2\tilde{p}},$ $i=2,3,$ are diagonal matrices, with
	\begin{eqnarray*}
		d_{j}^{(2)}&\h=\h&\left\{\begin{array}{ll}
			{1,} & {\text { for } \quad 1 \leq j \leq \tilde{p}}, \\
			{10^{-5}(j-\tilde{p})^{2}},  & \text { for }   {\tilde{p}+1 \leq j \leq 2 \tilde{p}},
		\end{array}\right. \\
		d_{j}^{(3)} &\h=\h&10^{-5}(j+\tilde{p})^{2}
		\text { for } 1 \leq j \leq 2 \tilde{p},
	\end{eqnarray*}
	and
	\begin{align*}
	E=\left(\begin{array}{cc}
	{\widehat{E} \otimes I_{p}} \\
	{I_{p} \otimes \widehat{E}}
	\end{array}\right), \quad \widehat{E}=\left(\begin{array}{ccccc}
	{2} & {-1} & {} & {} & {}\\
	{} & {2} & {-1} & {} & {} \\
	{} & {} & {\ddots} & {\ddots} & {}\\
	{} & {} & {} & {2} & {-1}
	\end{array}\right) \in \mathbb{R}^{p \times(p+1)}.
	\end{align*}
	According to the above definitions, we have $n=\hat{p}+4\tilde{p}$, $m=2\tilde{p}$ and $l=\hat{p}$.
	
	The parameters $\alpha$ and $\beta$ involved in the preconditioner $\mathcal{P}_{D1}$ are chosen as $\alpha=10^{-1}$ and $\beta=1$ (See \cite{BD}). Also, in the preconditioner $\mathcal{P},$ we set $\alpha=5 \times 10^{-1}$ and $\beta$ is computed similar to Example \ref{ex1}, that are listed in Table \ref{tab33}. In Table \ref{tab4}, we give numerical results for the FGMRES method incorporated with the preconditioners $\mathcal{P}, \mathcal{P}_{D1}, \mathcal{P}_{1}$ and $\mathcal{P}_{D2}.$ Hence, we have also reported the results of FGMRES without preconditioning. As observed, the preconditioner $\mathcal{P}$ substantially accelerate the convergence rate of FGMRES. It should be mentioned that when $p$ is large, only $\mathcal{P}$ and $\mathcal{P}_{D1}$ are feasible in practice.
	\begin{table}[!t]
		\centering
		\caption{\small{The values of $\beta$ involved in the preconditioner $\mathcal{P}$ for Example \ref{ex2} with $\alpha=5 \times 10^{-1}$}.\label{tab33}}\vspace{0.25cm}
		\begin{tabular}{|p{.9cm}| p{1.8cm} p{1.8cm}p{1.8cm}p{1.8cm} p{1.8cm}p{1.8cm}| }
			\hline
			\multirow{1}{*}{$p$}   & 16            &   32   &  64   &  128   &  256&512\\
			\hline
			\multirow{1}{*}{$\beta$}   & 0.36            &   0.35   &  0.35   &  0.35   &  0.35&0.34\\
			\hline
		\end{tabular}
		\label{tab3}
	\centering
	\caption{Numerical results for Example \ref{ex2}.\label{tab4}}\vspace{0.25cm}
	\begin{tabular}{|p{1.5cm}| p{1.5cm}|p{1.5cm}p{1.5cm}p{1.5cm} p{1.5cm}p{1.5cm}p{1.5cm}| }
		\hline
		\multirow{1}{*}{Precon.}   & $p$            &   16   & 32   &  64   &  128&256&512\\
		\hline\hline
		
		\multirow{4}{*}{$I$} & IT  & 186 &190&187&180&-&-\\
		& CPU & 0.25 &1.04 &3.01&27.87&\dag&\dag\\
		& Res & 1.0e-06&9.9e-07 &1.0e-06&9.8e-07&-&-\\
		& Err & 1.3e-06&1.4e-5 &1.4e-05&1.4e-05&-&-\\
		\hline
		\multirow{4}{*}{$\mathcal{P}$}&
		IT&53&55&56&54&52&50\\
		& CPU&0.06&0.15&0.55&2.89&14.50&57.30\\
		&Res&8.2e-07&9.3e-07&9.9e-07&9.9e-07&8.9e-07&8.4e-07\\
		& Err &1.2e-05 &1.5e-05&1.5e-05&1.6e-05&1.6e-05&1.2e-06\\
		\hline
		\multirow{4}{*}{$\mathcal{P}_{D1}$}&
		IT&70&69&68&65&63&60\\
		& CPU&0.11&0.28&0.93&5.46&24.08&99.58\\
		&Res&1.0e-06&9.5e-07&8.8e-07&9.3e-07&8.5e-07&9.5e-07\\
		& Err & 5.6e-06 &5.7e-06&5.0e-6&5.5e-06&4.9e-06&5.2e-06\\
		\hline
		\multirow{4}{*}{$\mathcal{P}_{1}$}&
		IT&10&10&10&9&-&-\\
		& CPU&0.04&0.14&1.96&41.00&\dag&\dag\\
		&Res&2.8e-07&3.1e-07&2.4e-07&8.8e-07&-&-\\
		& Err & 1.1e-06&2.1e-06&8.5e-07&1.7e-06&-&-\\
		\hline
		\multirow{4}{*}{$\mathcal{P}_{D2}$}&
		IT&19&19&19&18&-&-\\
		& CPU&0.05&0.24&3.51&63.50&\dag&\dag\\
		& Res&2.7e-07&2.0e-07&4.4e-07&8.7e-07&-&-\\
		& Err & 7.7e-07&4.7e-07&1.6e-06&3.7e-06&-&-\\
		\hline
	\end{tabular}
		\label{tab4}
\end{table}	
\end{example}
\begin{example}\label{Ex3}\rm
	Consider the quadratic program \cite{Huang2,KKT}:\\
\begin{eqnarray}
&\h\h& \min_{x\in\Bbb{R}^n,y\in\Bbb{R}^l} \frac{1}{2}x^TAx+r^Tx+q^Ty \label{CUTER}\\
\nonumber &\h\h&~~~s.t.:~~~ Bx+C^Ty=b, 
\end{eqnarray}
where the vector $\lambda\in\Bbb{R}^{m}$ is the Lagrange multiplier. To solve the above problem we define the Lagrange function
	\[
L(x, y, \lambda) =\frac{1}{2}x^TAx+r^Tx+q^Ty+\lambda^T(Bx+C^Ty-b),
\]
where the vector $\lambda\in\Bbb{R}^{m}$ is the Lagrange multiplier. Then the Karush-Kuhn-Tucker necessary conditions of
\eqref{CUTER}  are as follows (see \cite{Bertsekas})
\[
\nabla_{x} L(x, y, \lambda)=0,\quad \nabla_{y} L(x, y, \lambda)=0 \quad \text{and} \quad \nabla_{\lambda} L(x, y, \lambda)=0.
\]
These equations lead to a system of linear equations of the form \eqref{eq1}. In this example, the matrices $A, B$ and $C$ have been chosen from the CUTEr collection \cite{CUTER}. We note that for the test matrix MOSARQP1, the matrix $C$ is not full row rank. So, the matrix $C S^{-1} C^{T}$ is symmetric positive semidefinite. This means that the preconditioners $\mathcal{P}_{1}$ and $\mathcal{P}_{D2}$ are singular. Consequently, $\mathcal{P}_{1}$ and $\mathcal{P}_{D2}$ can not be applied as a preconditioner.  Similarly, for the test matrices AUG2D and AUG2DC, $A$ is symmetric positive semidefinite. Accordingly, the matrix $S$ and as well as $\mathcal{P}_{1}$ and $\mathcal{P}_{D1}$ can not be formed.\\
\indent In this example, for the preconditioners $\mathcal{P}$ and $\mathcal{P}_{D1}$ we set $\alpha=5 \times 10^{-1},$ and $\beta$ computed according to the formula \eqref{betaave}, that are reported in Table \ref{Tab5}. The result for FGMRES and application of the preconditioners are shown in Table \ref{Tab6}. As seen in Table \ref{Tab6}, the iteration steps and computational time for $\mathcal{P}$ are less than the other ones.
 \begin{table}[!t]
 	\centering
 	\caption{\small{The values of $\beta$ involved in the preconditioner $\mathcal{P}$ for Example \ref{Ex3} with $\alpha=5 \times 10^{-1}$ .}\label{Tab5}}\vspace{0.25cm}
	\begin{tabular}{| p{1.6cm}| p{2cm}p{2cm} p{1.8cm} p{1.7cm} p{2cm}p{2cm}| }
		\hline
		\multirow{1}{*}{ Matrix}   & MOSARQP1            &   AUG2DC   &  AUG2D   &  YAO  &  LISWET12&HUESMOD\\
		\hline
		\multirow{1}{*}{$\beta$}   & 0.66            &   0.82   &  0.53   &  0.60   &  0.60&0.49\\
		\hline
	\end{tabular}
	\label{tab6}
	\centering	
	\caption{Numerical results for Example \ref{Ex3}. \label{Tab6}}\vspace{0.25cm}
\begin{tabular}{| p{1.5cm}| p{1.5cm}|p{2cm} p{1.4cm} p{1.3cm} p{1.4cm}p{1.7cm}p{1.9cm}|} \hline
	\multirow{2}{*}{Precon.}& Matrix        & MOSARQP1    & AUG2DC     & AUG2D   &  YAO & LISWET12 &\small{HUESMOD} \\
	&{$\bf{n}$}& 5700&50400&50400&6004&30004&20002 \\
	&$nnz$& 14434&140600&140200&18006&90006&70000 \\
	 \hline\hline
	\multirow{4}{*}{$I$} & IT            & 110        & 69         &  69        &  61 &56 &9      \\
	& CPU           &0.24    & 0.65       &  0.63     &  0.08&0.28&0.01     \\
	& Err           & 9.9e-07  & 8.4e-07   &  8.5e-07  &  9.0e-07&8.3e-07& 6.2e-10 \\
	& Res          & 3.9e-06  & 3.8e-06   &  3.8e-06  &  4.1e-06 &4.1e-06& 6.3e-10\\ \hline
	\multirow{4}{*}{$\mathcal{P}$} & IT            & 31       & 32        &  46        &  35 &33&7       \\
	& CPU           & 0.04      & 0.26       &  0.54      &  0.06&0.19 &0.02    \\
	& Err           & 9.8e-07  &8.5e-07   &  9.4e-07  &  8.8e-07&8.7e-07 &5.1e-09\\ 
	& Res          & 5.2e-06  & 3.2e-06   &  4.2e-06  &  3.8e-06 &3.7e-06&5.2e-09\\ \hline
	\multirow{4}{*}{${\mathcal{P}}_{D1}$} & IT            & 60        & 55         & 58         & 49  &46&11    \\
	& CPU           & 0.11      & 0.61       & 0.61       &  0.09  &0.32 &0.04  \\
	& Err           & 8.4e-07  & 8.6e-7    & 9.0e-07   &  9.1e-07&8.0e-07 &8.1e-08\\ 
	& Res          & 4.1e-06  & 3.6e-06   &  3.9e-06  &  4.5e-06 &4.0e-06&1.4e-07\\ \hline
	\multirow{4}{*}{$\mathcal{P}_{1}$}    & IT            &    \S   & \S         & \S  &  \dag &\dag &9      \\
	& CPU           & -      & -       &  -      &  - &-&10.65    \\
	& Err          & -  &-   & -  &  - &-&3.0e-06\\
	& Res          & -  & -   & -  &  - &-&3.0e-06\\
	\hline
	\multirow{4}{*}{$\mathcal{P}_{D2}$}     & IT            & \S        & \S        &  \S      &  \dag  &\dag   &10   \\
	& CPU           & -      & -       &  -      &  -   &- &10.48 \\
	& Err           & -  & -    & -  & - &-&1.8e-06\\
	& Res      & -  & -   &-  &  - &-&1.7e-06\\
	\hline
\end{tabular}
\end{table}
 \end{example}
\section{Conclusion}\label{sec4}
	We have proposed a new iteration method for solving a class of three-by-three saddle point problems. The convergence theory of the method have been studied. The exploited preconditioner from the presented method, has been applied for accelerating the convergence rate of Krylov subspaces method, especially for GMRES method. The remarkable point was that introduced preconditioner is easy to implement.
	Numerical results indicate that the presented preconditioner is effective.   

\end{document}